\documentclass[12pt]{article}
\usepackage{float}
\usepackage{amsmath,amssymb,amsthm}
\usepackage{mathtools}
\usepackage{graphicx}
\usepackage{booktabs}
\usepackage{natbib}
\usepackage{hyperref}
\usepackage[margin=1in]{geometry}
\usepackage{enumitem}
\usepackage{xcolor}
\usepackage{subcaption}
\DeclareMathOperator*{\argmax}{arg\,max}

\newtheorem{theorem}{Theorem}[section]

\newtheorem{proposition}[theorem]{Proposition}
\newtheorem{corollary}[theorem]{Corollary}
\newtheorem{definition}[theorem]{Definition}
\newtheorem{remark}[theorem]{Remark}

\newcommand{\E}{\mathbb{E}}
\newcommand{\Var}{\mathrm{Var}}
\newcommand{\R}{\mathbb{R}}

\newcommand{\ind}{\mathbf{1}}
\newcommand{\cF}{\mathcal{F}}
\newcommand{\Cov}{\text{Cov}}

\title{A Selection Premium Decomposition for the Expected Maximum of Random Walks}

\author{
Victor H. de la Pena$^{1}$\thanks{Email: \texttt{vhd1@columbia.edu}} \and Fangyuan Lin$^{1}$\thanks{Email: \texttt{fl2744@columbia.edu}} \and Victor Keegan de la Pe\~na$^{1,2}$\thanks{Email: \texttt{vkd2107@columbia.edu}} \\[0.3cm]
\small $^{1}$\textit{Department of Statistics, Columbia University }\\
\small $^{2}$Infremacy, LLC 
}

\date{February 2026}

\begin{document}

\maketitle

\begin{abstract}
When $K$ models are evaluated on the same validation set of size $n$, the selected winner's apparent performance is biased upward. Suppose $K$ models are evaluated on a shared sequence of i.i.d. observations $X_1,\dots, X_n$, where model $k$ achieves response $f_k(X_i)$ with mean $\mu_k = \mathbb E[f_k(X)]$. Writing $Y_{i,k} = f_k(X_i)-\mu_k$ for the centered increment and $S_{n,k} = \sum_{i=1}^n Y_{i,k}$ for the centered cumulative score, the expected maximum satisfies
\[
0\le\E\bigl[\max_k S_{n,k}\bigr] = \sum_{i=1}^n \E\bigl[\varphi_K(S_{i-1})\bigr]
\]
where $\varphi_K(u) = \E\bigl[\max_k(u_k + Y_k)\bigr] - \max_k u_k$, $u\in \mathbb R^K$, is the selection premium function. This formula corresponds to the “null hypothesis” case (all models are equal in the sense that they have the same mean), which clarifies that the bias arises from selection. While this decomposition follows from elementary conditioning and telescoping, we develop the analytical consequences in five directions. (i)~structural properties of $\varphi_K$; (ii)~extension to stopping times, recovering Wald's equation at $K=1$; (iii)~a winner's curse decomposition for heterogeneous means; (iv)~a universal bias concentration law showing that the first $\alpha$-fraction of observations generates a $\sqrt\alpha$-fraction of total bias.
\medskip

\noindent\textbf{Keywords:} Expected maximum, Selection bias, winner's curse, model selection, martingale decomposition, Wald's equation,   random walks.
\medskip

\noindent\textbf{MSC 2020:} 60G50, 60E15, 62F07.
\end{abstract}

\section{Introduction}

Model selection is unavoidable in modern statistics and machine learning: a practitioner trains $K$ candidate models and reports the performance of the best. When all candidates are evaluated on the same dataset, the winning model's apparent performance is biased upward---a manifestation of the general phenomenon variously called the winner's curse \cite{thaler1988winner} or selection bias. Understanding this bias is crucial for simulation optimization \cite{nelson1995using, chick2001new}, clinical trials \citep{tripepi2010selection}, backtesting trading strategies, and the ubiquitous practice of model selection in machine learning \citep{cawley2010over}.

Wald's equation \cite{wald1944cumulative} is a fundamental result connecting expectations of random sums to expectations of their summands. In its simplest form, if $Y_1, Y_2, \ldots$ are i.i.d.\ with $\E[Y_i] = 0$ and $T$ is a stopping time with $\E[T] < \infty$, then $\E\biggl[\sum_{i=1}^T Y_i\biggr] = \E[T] \cdot \E[Y_1] = 0.$ This result concerns a single sum. In many applications, we observe not a single sum but $K$ parallel sums and select the maximum. Consider $K$ treatments evaluated on a common sequence of patients, or $K$ machine learning models evaluated on a shared validation set. A natural question emerges: what is $\mathbb E[\max_k S_{n,k}]$ when $S_{n,k} = \sum_{i=1}^n Y_{i,k}$ are $K$ centered random walks driven by a common sequence of observations? This quantity, always non-negative by Jensen's inequality, measures the pure selection bias---the expected inflation from choosing the best among $K$ random trajectories even when all have mean zero. 

This paper provides an exact answer. Theorem~\ref{thm:main} decomposes the expected maximum into a sum of per-step \emph{selection premiums}:
\begin{equation}
\E\bigl[\max_k S_{n,k}\bigr] = \sum_{i=1}^n \E\bigl[\varphi_K(S_{i-1})\bigr],
\end{equation}
where selection premium function $\varphi_K(u) = \E[\max_k(u_k + Y_k)] - \max_k u_k$ quantifies the expected gain from selecting the best arm \emph{after} observing one more data point, relative to committing to the current leader. We then establish structural properties of $\varphi_K$: non-negativity, translation invariance, global maximum at the tie configuration, and vanishing when a leader is decisive.

It extends to random stopping times under the same integrability conditions as classical Wald, and hence $K = 1$, the identity reduces to Wald's equation. For the heterogeneous case where we consider models with distinct means $\mu_1, \ldots, \mu_K$, the identity decomposes the winner's curse into a drift contribution and a selection premium (Theorem~\ref{thm:winners_curse}).

Perhaps the most revealing consequence is the precise characterization of how selection bias accumulates over time. For Gaussian increments, we prove (Theorem~\ref{thm:gaussian_decay}) that the per-step premium satisfies the decay
\[
\E\bigl[\varphi_K(S_{i-1})\bigr] = \sigma\,g(K)\bigl(\sqrt{i} - \sqrt{i-1}\bigr) \sim \frac{\sigma\,g(K)}{2\sqrt{i}},
\]
where $g(K) = \E[\max_{k \le K} Z_k]$ for i.i.d.\ standard normals.

Under finite second moments, a multivariate CLT implies $\E[\max_k S_{n,k}]=c\sqrt{n}+o(\sqrt{n})$ (Theorem~\ref{thm:general_qualitative}). Combining this with the telescoping form $\E[\varphi_K(S_{i-1})]=\E[M_i]-\E[M_{i-1}]$ yields a universal \emph{bias concentration} law: the first $\alpha$-fraction of steps contributes asymptotically a $\sqrt{\alpha}$-fraction of the total bias (Theorem~\ref{thm:general_qualitative}(b)). Therefore, selection bias is primarily a small-sample phenomenon: it accumulates during the early ``competition phase'' when models are close together, and new observations contribute negligible additional bias once a leader has emerged. For Gaussian increments, the premium drops to $10\%$ of its initial value by step $\lceil 1/(4\cdot 0.01)\rceil = 26$.


\section{Problem Formulation}\label{sec:formulation}

Let $(\Omega, \cF, P)$ be a probability space. We observe a sequence $X_1, X_2, \ldots$ of i.i.d.\ random elements taking values in a measurable space $\mathcal{X}$. For each arm $k \in \{1, \ldots, K\}$, define:
\begin{itemize}
\item $f_k: \mathcal{X} \to \mathbb R$ --- measurable response function
\item $\mu_k = \E[f_k(X)]$ --- true mean effect
\item $Y_{i,k} = f_k(X_i) - \mu_k$ --- centered increment
\item $S_{n,k} = \sum_{i=1}^n Y_{i,k}$ --- centered cumulative sum

\end{itemize}

The true best model is $k^* \in \argmax_{k\in[K]}\mu_k$. When the argmax is unique, we say there is a unique best model with gap $\Delta = \mu_{k^*} - \max_{j\ne k^*}\mu_k > 0$.

The vector $Y_i = (Y_{i,1}, \ldots, Y_{i,K})\in \mathbb R^K$ is determined by the single observation $X_i$, so increments across arms are dependent within each time step but independent across time:
\begin{equation}\label{eq:independence}
Y_i \perp Y_j \quad \text{for } i \neq j.
\end{equation}

Let $\cF_n = \sigma(X_1, \ldots, X_n)$ denote the natural filtration, with $\cF_0 = \{\emptyset, \Omega\}$. Define:
\begin{equation}\label{eq:Mn_def}
M_n := \max_k S_{n,k}
\end{equation}
the maximum cumulative sum across all $K$ models. Our central object of interest is $\E[M_n]$.

\subsection{Standing assumptions}
For the purpose of showing the main identity, we assume:
\begin{enumerate}[label=(A\arabic*)]
    \item \textbf{(i.i.d. increments)} $Y_1, Y_2,\dots$ are i.i.d. in $\mathbb R^K$ and $Y_i$ is independent of $\mathcal F_{i-1}.$
    \item \textbf{(centering)} $\mathbb E[Y_1] = 0$ (i.e. $\mathbb E[Y_{1,k}] = 0$ for each $k$.)
    \item \textbf{(integrability)} $\mathbb E[\max_{1\le k\le K} |Y_{1,k}|] < \infty$.
\end{enumerate}

Assumption (A2) is used for the main identity; we relax it in Section~\ref{sec:unequal_means} when discussing unequal means. Assumption (A3) holds if $\mathbb E|Y_{1,k}|<\infty$ for all $k$, since the number of models $K < \infty$. 

\begin{definition}[Selection Premium Function]\label{def:selection_premium}
For $u = (u_1, \ldots, u_K) \in \R^K$, the \emph{selection premium} (or \emph{selection inflation function}) is:
\begin{equation}\label{eq:phi_def}
\varphi_K(u) = \E\bigl[\max_k(u_k + Y_k)\bigr] - \max_k u_k.
\end{equation}
This function quantifies the expected gain from random perturbations compared to selecting from the current state $u$.

\begin{remark}[Interpretation of $\varphi_K(u)$]\label{rmk:selection-premium-interpretation}
    Imagine standing at position $u = (u_1,\dots, u_K)$ which represents current cumulative performance of $K$ models. After one more shared observation, the position of the $k$th model becomes $u_k+Y_k$ where $Y_k$ is random.
    \begin{itemize}
        \item If you must commit to the current best model, you get $\max_k u_k$.
        \item If you can wait and pick the best after observing $(Y_1,\dots, Y_K)$, you expect $\E[\max_k (u_k + Y_k)]$.
    \end{itemize}
    The selection premium function $\varphi_K$ is the value of the option to switch: the expected gain from being able to select ex-post rather than ex-ante. This ``option value'' is always non-negative (Proposition~\ref{prop:phi_properties}) and depends on how close the competitors are. When the $K$ models are tied, the option is most valuable and $\varphi_K(\mathbf{0}) = \E[\max_k Y_k]$. When one model dominates, the option has less value (Proposition~\ref{prop:phi_properties}).
\end{remark}
\end{definition}

\section{The Per-Step Decomposition}\label{sec:main_results}


Define $M_n := \max_k S_{n,k}$.

\begin{theorem}[Decomposition identity]\label{thm:main}
Under (A1)-(A3). 

\begin{equation}\label{eq:main_identity}
\E[M_n] = \E\bigl[\max_k S_{n,k}\bigr] = \sum_{i=1}^n \E\bigl[\varphi_K(S_{i-1})\bigr]
\end{equation}
where $S_0 = \mathbf{0} = (0, \ldots, 0)\in \mathbb R^K$.
\end{theorem}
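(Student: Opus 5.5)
The plan is to telescope $\E[M_n]$ over time and identify each increment $\E[M_i]-\E[M_{i-1}]$ with $\E[\varphi_K(S_{i-1})]$ by conditioning on $\cF_{i-1}$. Since $M_0=\max_k S_{0,k}=0$, we have
\[
\E[M_n]=\sum_{i=1}^n\bigl(\E[M_i]-\E[M_{i-1}]\bigr),
\]
provided each $M_i$ is integrable. Integrability comes first. Note $|M_i|\le \max_k|S_{i,k}|\le \sum_{j=1}^i \max_k|Y_{j,k}|$. By (A1) and (A3), the right-hand side has finite expectation $i\,\E[\max_k|Y_{1,k}|]$. Hence every $M_i$ is in $L^1$, the telescoping is legitimate, and all finite sums split freely.

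The core step is a one-step conditional identity. Write $M_i=\max_k(S_{i-1,k}+Y_{i,k})$, where $S_{i-1}$ is $\cF_{i-1}$-measurable and, by (A1), $Y_i$ is independent of $\cF_{i-1}$ with the law of a generic $Y=(Y_1,\dots,Y_K)$. Apply the freezing lemma (the substitution property of conditional expectation for independent variables) to the jointly measurable function $h(u,y)=\max_k(u_k+y_k)$. It satisfies $|h(u,y)|\le \max_k|u_k|+\max_k|y_k|$, so $h(u,Y)$ is integrable for every fixed $u$ and $h(S_{i-1},Y_i)$ is integrable. The lemma then gives
\[
\E[M_i\mid \cF_{i-1}]=\Phi(S_{i-1}),\qquad \Phi(u):=\E\bigl[\max_k(u_k+Y_k)\bigr],
\]
almost surely. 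By the definition \eqref{eq:phi_def}, $\Phi(u)=\varphi_K(u)+\max_k u_k$, so
\[
\E[M_i\mid\cF_{i-1}]=M_{i-1}+\varphi_K(S_{i-1}).
\]
Taking expectations yields $\E[M_i]-\E[M_{i-1}]=\E[\varphi_K(S_{i-1})]$. This is finite because both $M_i$ and $M_{i-1}$ are integrable, which also shows $\varphi_K(S_{i-1})\in L^1$. Summing over $i$ gives \eqref{eq:main_identity}.

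The only delicate point is the measure-theoretic justification of the freezing step. It needs measurability of $\Phi$ (from Fubini, since $h$ is Borel) and the integrability bound above, which is exactly where (A3) enters. Assumption (A2) is not needed for the identity itself. It matters only for the interpretation, and for the nonnegativity $\E[M_n]\ge 0$, which follows from $\varphi_K\ge 0$ via Jensen: $\Phi(u)\ge \max_k(u_k+\E Y_k)=\max_k u_k$. Everything else is bookkeeping.
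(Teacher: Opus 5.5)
Your proposal is correct and is essentially the paper's proof: telescope $\E[M_n]=\sum_{i=1}^n(\E[M_i]-\E[M_{i-1}])$ and compute each increment via $\E[M_i\mid\cF_{i-1}]=\Phi(S_{i-1})$, using that $Y_i$ is independent of $\cF_{i-1}$. Your extra checks make rigorous what the paper leaves implicit: integrability of each $M_i$ via (A3), the freezing-lemma justification, and the observation that (A2) is only needed for $\varphi_K\ge 0$, not for the identity itself.
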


\begin{proof}
By the independence $Y_i \perp \mathcal F_{i-1}$ and measurability of $S_{i-1}$ with respect to $\mathcal F_{i-1}$,
\begin{equation}
    \E[M_i \mid \mathcal F_{i-1}] = \E[\max_k(S_{i-1,k}+Y_{i,k})\mid\mathcal F_{i-1}] = g(S_{i-1}).
\end{equation}
where $g(u) = \E[\max_k(u_k+Y_k)]$. Therefore,
\begin{equation}
    E[M_{i}] - E[M_{i-1}] = E[g(S_{i-1})] - E[\max_k S_{i-1,k}] = E[\varphi_K(S_{i-1})].
\end{equation}
Summing from $i=1$ to $n$ and using $M_0 = 0$,
\begin{equation}
    E[M_n] = \sum_{i=1}^n (E[M_i] - E[M_{i-1}]) = \sum_{i=1}^n E[\varphi_K(S_{i-1})]
\end{equation}
\end{proof}

\begin{remark}[On the proof]
The identity is a telescoping sum with each increment computed by conditioning. Consider the Doob decomposition of the submartingale 
\begin{equation}
    M_n = M_0 + A_n + N_n
\end{equation}
where $A_n$ is a predictable increasing process with $A_0 = 0$ and $N_n$ is a martingale with $N_0 = 0$. It produces the same result ($\Delta A_i = \varphi_K(S_{i-1})$ is exactly the predictable compensator),
but the direct argument above makes the elementary nature transparent.
The value of the identity lies not in its proof but in the analytical and interpretational consequences developed in the subsequent sections.
\end{remark}

\subsection{Properties of the Selection Premium Function}

\begin{proposition}[Properties of $\varphi_K$]\label{prop:phi_properties}
The selection premium function $\varphi_K$ satisfies:
\begin{enumerate}[label=(\roman*)]
\item Non-negativity: $\varphi_K(u) \geq 0$ for all $u \in \R^K$.
\item No selection, no problem: $\varphi_1(u) = 0$ for all $u \in \R$.
\item Translation invariance: For any $u \in \R^K$ and any constant $c \in \R$,
\begin{equation}
\varphi_K(u + c\mathbf{1}) = \varphi_K(u),
\end{equation}
where $\mathbf{1} = (1,\dots,1) \in \R^K$.
\item Vanish when leader is decisive: If $|Y_k| \leq b$ a.s.\ for all $k$, and $\max_k u_k - \max_{j \neq k^*} u_j > 2b$, then $\varphi_K(u) = 0$.
\item global maximum at ties: $\varphi_K(u) \leq \varphi_K(\mathbf{0})$ for all $u \in \R^K$.
\end{enumerate}
\end{proposition}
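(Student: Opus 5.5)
Each part reduces to a pointwise comparison inside the expectation defining $\varphi_K$ in \eqref{eq:phi_def}. Throughout, $Y=(Y_1,\dots,Y_K)$ is distributed as $Y_1$ under (A1)--(A3). By (A3), $\max_k(u_k+Y_k)$ is integrable, so $\varphi_K$ is finite everywhere.

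\textbf{Parts (i)--(iii).} For (i), fix any index $j$ with $u_j=\max_k u_k$. Then $\max_k(u_k+Y_k)\ge u_j+Y_j$ pointwise. Taking expectations and using centering (A2), $\E[Y_j]=0$, we get $\E[\max_k(u_k+Y_k)]\ge \max_k u_k$. For (ii), when $K=1$ we have $\varphi_1(u)=\E[u+Y]-u=\E[Y]=0$. For (iii), adding $c$ to every coordinate shifts both $\max_k(u_k+c+Y_k)$ and $\max_k(u_k+c)$ by exactly $c$, so the difference is unchanged.

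\textbf{Part (iv).} Here I read $k^*$ as the leading index of $u$, i.e.\ $u_{k^*}=\max_k u_k$. On the event $|Y_k|\le b$ for all $k$, every $j\ne k^*$ satisfies
\[
u_j+Y_j\le u_j+b<u_{k^*}-2b+b=u_{k^*}-b\le u_{k^*}+Y_{k^*}.
\]
Hence $\max_k(u_k+Y_k)=u_{k^*}+Y_{k^*}$ almost surely. Taking expectations and using (A2) gives $\E[\max_k(u_k+Y_k)]=u_{k^*}$, so $\varphi_K(u)=0$.

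\textbf{Part (v).} This is the substantive step. By (iii), I may normalize so that $\max_k u_k=0$, which gives $u_k\le 0$ for all $k$. It then suffices to show $\E[\max_k(u_k+Y_k)]\le\E[\max_k Y_k]$. Since $u_k\le 0$, we have $u_k+Y_k\le Y_k$ for each $k$, so $\max_k(u_k+Y_k)\le\max_k Y_k$ pointwise. Monotonicity of the maximum and of expectation then gives
\[
\varphi_K(u)=\E[\max_k(u_k+Y_k)]-0\le \E[\max_k Y_k]=\varphi_K(\mathbf 0).
\]

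The only points needing care are the integrability of the maxima, which (A3) handles, and the use of translation invariance to normalize. I expect no serious obstacle. The one subtlety is that the normalization in (v) must place the current leader at $0$ rather than centering $u$ in some other way. It is this choice that makes every coordinate nonpositive, so that pointwise domination applies.
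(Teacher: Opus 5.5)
Your proposal is correct and follows essentially the same route as the paper: (ii)--(iv) match its argument (your explicit inequality chain in (iv) just makes the ``leader cannot change'' step precise), and (v) is the same pointwise bound $\max_k(u_k+Y_k)\le \max_k u_k+\max_k Y_k$, only normalized via (iii). The one cosmetic difference is in (i), where you compare against the leading coordinate directly rather than citing Jensen's inequality for the convex map $\max$; this is the same elementary fact.
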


\begin{proof}
(i) The max function is convex, so by Jensen's inequality:
\[
\E[\max_k(u_k + Y_k)] \geq \max_k \E[u_k + Y_k] = \max_k u_k.
\]

(ii) When $K = 1$: $\varphi_1(u) = \E[u + Y] - u = \E[Y] = 0$.

(iii) By definition,
\[
\varphi_K(u + c\mathbf{1})
= \E\bigl[\max_k (u_k + c + Y_k)\bigr] - \max_k (u_k + c).
\]
Since $\max_k (u_k + c + Y_k) = c + \max_k (u_k + Y_k)$ and
$\max_k (u_k + c) = c + \max_k u_k$, the constant $c$ cancels:
\[
\varphi_K(u + c\mathbf{1})
= \E[\max_k (u_k + Y_k)] - \max_k u_k
= \varphi_K(u).
\]

(iv) When the leader's gap exceeds $2b$ and $|Y_k| \leq b$ a.s., no noise realization can change the identity of the leader, so $\max_k(u_k + Y_k) = u_{k^*} + Y_{k^*}$ a.s., giving $\varphi_K(u) = \E[Y_{k^*}] = 0$.

(v) Let $m := \max_k u_k$. Then $u_k \leq m$ for all $k$, so:
\begin{align}
    u_k + Y_k &\leq m + Y_k \quad \text{for all } k,\\
    \max_{k}(u_k + Y_k) &\leq \max_k(m + Y_k) = m + \max_k Y_k.
\end{align}
Taking expectations:
\begin{align}
    \E[\max_{k}(u_k + Y_k)] &\leq m + \E[\max_k Y_k],\\
    \varphi_K(u) = \E[\max_{k}(u_k + Y_k)] - m &\leq \E[\max_k Y_k] = \varphi_K(\mathbf{0}).
\end{align}
\end{proof}

\begin{corollary}\label{thm:bounds}
Assume (A1)-(A3). For fixed $n$ and $K \geq 1$:
\begin{equation}\label{eq:bounds}
0 \leq \E\bigl[\max_k S_{n,k}\bigr] \leq n \cdot \E\bigl[\max_k Y_k\bigr].
\end{equation}
\end{corollary}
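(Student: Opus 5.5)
The plan is to combine the decomposition identity of Theorem~\ref{thm:main} with the pointwise bounds on $\varphi_K$ from Proposition~\ref{prop:phi_properties}. No new probabilistic input is needed: each summand in \eqref{eq:main_identity} will be bounded between $0$ and $\varphi_K(\mathbf{0})$.

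First, I invoke Theorem~\ref{thm:main}, which gives
\[
\E\bigl[\max_k S_{n,k}\bigr] = \sum_{i=1}^n \E\bigl[\varphi_K(S_{i-1})\bigr].
\]
For the lower bound, Proposition~\ref{prop:phi_properties}(i) gives $\varphi_K(u)\ge 0$ for every $u\in\R^K$. In particular $\varphi_K(S_{i-1})\ge 0$ pointwise on $\Omega$, so each expectation is non-negative and so is the sum.

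For the upper bound, Proposition~\ref{prop:phi_properties}(v) gives $\varphi_K(u)\le \varphi_K(\mathbf{0}) = \E[\max_k Y_k]$ for all $u$. Applying this at $u=S_{i-1}(\omega)$ and taking expectations bounds each summand by $\E[\max_k Y_k]$. Summing the $n$ terms yields $n\,\E[\max_k Y_k]$.

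The only point needing care is that the expectations $\E[\varphi_K(S_{i-1})]$ are well defined and finite, so that the sum and the termwise comparison are legitimate. By (A3), $|\max_k(u_k+Y_k)|\le \max_k|u_k| + \max_k|Y_k|$ is integrable, so $\varphi_K$ is finite everywhere. The bound $0\le\varphi_K\le \E[\max_k Y_k]<\infty$ then makes $\varphi_K(S_{i-1})$ a bounded random variable. Measurability of $u\mapsto\varphi_K(u)$ is not an issue: it is continuous, in fact $1$-Lipschitz in the sup norm, because $\max$ is $1$-Lipschitz. These are the same facts already used implicitly in the proof of Theorem~\ref{thm:main}, so this step is routine rather than a genuine obstacle.

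As an alternative check for the upper bound, one can bypass $\varphi_K$ entirely. Using $\max_k \sum_i Y_{i,k}\le \sum_i \max_k Y_{i,k}$ and the identical distribution in (A1) gives the same bound directly.
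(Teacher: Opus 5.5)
Your proof is correct and matches the paper's: you apply Theorem~\ref{thm:main} and bound each summand termwise using Proposition~\ref{prop:phi_properties}(i) and (v). Your integrability remarks and the direct bound $\max_k \sum_i Y_{i,k} \le \sum_i \max_k Y_{i,k}$ are harmless extras, though $\varphi_K$ is a difference of two $1$-Lipschitz maps and is in general only $2$-Lipschitz in the sup norm, not $1$-Lipschitz; continuity is all you need there.
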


\begin{proof}
Non-negativity follows from the non-negative of the selection premium function $\varphi$.

For the upper bound, by Proposition~\ref{prop:phi_properties}:
\[
\E\bigl[\max_k S_{n,k}\bigr] = \sum_{i=1}^n \E[\varphi_K(S_{i-1})] \leq \sum_{i=1}^n \varphi_K(\mathbf{0}) = n \cdot \E[\max_k Y_k].
\]
\end{proof}

The upper bound in \eqref{eq:bounds} is typically crude: under mild
conditions (e.g.\ finite second moments) we will show that
$\E[\max_k S_{n,k}] = \Theta(\sqrt{n})$ as $n\to\infty$.

\subsection{Extension to Stopping Times}
The fixed-$n$ identity extends to random stopping times.
\begin{theorem}[Stopping time extension]\label{thm:stopping}
Assume \textnormal{(A1)--(A3)}. Let $T$ be a stopping time with respect to
$(\cF_n)$ satisfying $\E[T] < \infty$.
Then:
\begin{equation}\label{eq:stopping_identity}
\E[M_T] \;=\; \E\bigl[\max_k S_{T,k}\bigr]
\;=\; \sum_{i=1}^{\infty}
      \E\bigl[\varphi_K(S_{i-1})\,\ind\{T \geq i\}\bigr].
\end{equation}
\end{theorem}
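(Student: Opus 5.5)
The plan is to apply the telescoping argument of Theorem~\ref{thm:main} to the stopped process and then pass to the limit using an integrable dominating variable. Write $M_{T\wedge n} = \sum_{i=1}^{n} (M_i - M_{i-1})\ind\{T\ge i\}$, with $M_0=0$. Since $\{T\ge i\} = \{T\le i-1\}^c \in \cF_{i-1}$ and $Y_i$ is independent of $\cF_{i-1}$, the conditioning step from the proof of Theorem~\ref{thm:main} gives
\[
\E\bigl[(M_i - M_{i-1})\ind\{T\ge i\}\bigr] = \E\bigl[\ind\{T\ge i\}\,\E[M_i - M_{i-1}\mid \cF_{i-1}]\bigr] = \E\bigl[\varphi_K(S_{i-1})\ind\{T\ge i\}\bigr].
\]
Summing, we obtain $\E[M_{T\wedge n}] = \sum_{i=1}^n \E[\varphi_K(S_{i-1})\ind\{T\ge i\}]$ for every $n$. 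All terms here are integrable: $|M_i - M_{i-1}| \le \max_k |Y_{i,k}|$ (the max is $1$-Lipschitz in sup norm), and (A3) makes this integrable.

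Next, let $n\to\infty$. The right side is a series of nonnegative terms, by Proposition~\ref{prop:phi_properties}(i), so it increases to the infinite sum in \eqref{eq:stopping_identity}. The sum is finite, since by Proposition~\ref{prop:phi_properties}(v) it is at most $\varphi_K(\mathbf 0)\sum_i P(T\ge i) = \E[\max_k Y_k]\,\E[T]<\infty$. For the left side, note that $M_{T\wedge n}\to M_T$ a.s. because $T<\infty$ a.s., which follows from $\E[T]<\infty$. We therefore need domination, and this is the main obstacle.

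The natural dominating variable is
\[
D := \sum_{i=1}^{T} \max_k|Y_{i,k}| = \sum_{i\ge 1} \max_k|Y_{i,k}|\,\ind\{T\ge i\},
\]
which satisfies $|M_{T\wedge n}| \le D$ for all $n$. To show $\E[D]<\infty$, use Wald's argument: since $\ind\{T\ge i\}$ is $\cF_{i-1}$-measurable and independent of $Y_i$, monotone convergence gives $\E[D] = \sum_i \E\max_k|Y_{1,k}|\,P(T\ge i) = \E[\max_k|Y_{1,k}|]\,\E[T] < \infty$. Dominated convergence then yields $\E[M_{T\wedge n}]\to\E[M_T]$, which gives \eqref{eq:stopping_identity}.

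Finally, for $K=1$ we have $\varphi_1\equiv 0$ by Proposition~\ref{prop:phi_properties}(ii), so the identity reduces to Wald's equation $\E[S_T]=0$, as a sanity check. The only delicate points are the measurability of $\{T\ge i\}$ with respect to $\cF_{i-1}$ and the domination by $D$; everything else is the fixed-$n$ conditioning argument already given.
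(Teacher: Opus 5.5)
Your proof is correct and is essentially the paper's argument. The paper phrases it through the Doob decomposition $M_n = A_n + N_n$ and optional stopping for the uniformly integrable stopped martingale $N_{T\wedge n}$, whereas you compute $\E[M_{T\wedge n}]$ by conditioning directly and pass to the limit by dominated convergence; both rest on the same dominating variable $\sum_{i=1}^{T}\max_k|Y_{i,k}|$, Wald's identity for its expectation, and monotone convergence for the compensator sum (your version, dominating $M_{T\wedge n}$ rather than $N_{T\wedge n}$, does not strictly need the bound $\E[A_T]\le \E[\max_k Y_k]\,\E[T]$).
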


\begin{proof}
Let $M_n = M_0 + A_n + N_n$ be the Doob decomposition from
Theorem~\ref{thm:main}, with $A_0 = N_0 = 0$.

Since $\varphi_K \geq 0$ (Proposition~\ref{prop:phi_properties}(i)),
\[
A_{T\wedge n}
= \sum_{i=1}^{T\wedge n} \varphi_K(S_{i-1})
= \sum_{i=1}^{n} \varphi_K(S_{i-1})\,\ind\{T \geq i\}
\;\uparrow\; A_T
\quad\text{a.s.}
\]
By monotone convergence,
\begin{equation}\label{eq:AT}
\E[A_T]
= \sum_{i=1}^{\infty}
  \E\bigl[\varphi_K(S_{i-1})\,\ind\{T \geq i\}\bigr].
\end{equation}
Moreover, using $\varphi_K(u) \leq \varphi_K(\mathbf{0}) = \E[\max_k Y_k]$
(Proposition~\ref{prop:phi_properties}(v)),
\begin{equation}\label{eq:AT_finite}
\E[A_T]
\leq \E[\max_k Y_k]\;\E[T]
< \infty.
\end{equation}

We show that the stopped martingale is dominated by an integrable
random variable, which implies uniform integrability.

For $|M_{T\wedge n}|$, the Lipschitz property of $\max$ gives
$|M_i - M_{i-1}| \leq \max_k |Y_{i,k}|$, so
\[
|M_{T\wedge n}|
\;\leq\; \sum_{i=1}^{T\wedge n} \max_k |Y_{i,k}|
\;\leq\; \sum_{i=1}^{T} \max_k |Y_{i,k}|
\;=:\; Z_1.
\]
By Wald's identity for non-negative summands,
$\E[Z_1] = \E[T]\,\E[\max_k |Y_{1,k}|] < \infty$ under (A3).
Combined with \eqref{eq:AT_finite},
\[
|N_{T\wedge n}|
= |M_{T\wedge n} - A_{T\wedge n}|
\leq |M_{T\wedge n}| + A_{T\wedge n}
\leq Z_1 + A_T
=: Z,
\]
and $\E[Z] \leq \E[T]\bigl(\E[\max_k |Y_{1,k}|]
+ \E[\max_k Y_k]\bigr) < \infty$.
Hence $\{N_{T\wedge n} : n \geq 0\}$ is dominated by the integrable
random variable $Z$ and is therefore uniformly integrable.

By Doob's optional stopping theorem, uniform integrability of
$(N_{T\wedge n})$ gives $\E[N_T] = \E[N_0] = 0$.
Since $M_T = A_T + N_T$,
\[
\E[M_T] = \E[A_T] + \E[N_T] = \E[A_T]
= \sum_{i=1}^{\infty}
  \E\bigl[\varphi_K(S_{i-1})\,\ind\{T \geq i\}\bigr].
  \qedhere
\]
\end{proof}

\begin{proposition}[Reduction to Wald's equation]\label{prop:wald_reduction}
Let $K=1$. Assume \textnormal{(A1)--(A3)} and let $T$ be a stopping time
with $\E[T] < \infty$. Then $\E[S_T] = 0$.
More generally, if $\E[Y_1] = \mu \neq 0$, then $\E[S_T] = \mu\,\E[T]$.
\end{proposition}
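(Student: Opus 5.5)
The plan is to apply Theorem~\ref{thm:stopping} with $K=1$ and then combine it with Proposition~\ref{prop:phi_properties}(ii). When $K=1$ we have $M_n = S_n$. The hypotheses (A1)--(A3) and $\E[T]<\infty$ are exactly those required by Theorem~\ref{thm:stopping}, so
\[
\E[S_T] = \sum_{i=1}^{\infty}\E\bigl[\varphi_1(S_{i-1})\,\ind\{T\ge i\}\bigr].
\]
By Proposition~\ref{prop:phi_properties}(ii), $\varphi_1\equiv 0$, so every summand vanishes and $\E[S_T]=0$. Convergence is not an issue: the series consists of zeros, and the integrability of $S_T$ was already settled inside the proof of Theorem~\ref{thm:stopping} through the bound $|S_T|\le \sum_{i\le T}|Y_i|$.

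For the general statement with $\E[Y_1]=\mu\neq 0$, I will reduce to the centered case. Write $Y_i = \tilde Y_i + \mu$ with $\tilde Y_i := Y_i-\mu$. The $\tilde Y_i$ are i.i.d., independent of $\cF_{i-1}$, centered, and integrable, so they satisfy (A1)--(A3). The centered part then gives $\E[\sum_{i=1}^T \tilde Y_i]=0$. The remaining term is $\sum_{i=1}^T \mu = \mu T$, whose expectation $\mu\E[T]$ is finite. Adding the two contributions, which is legitimate since both are integrable, yields $\E[S_T]=\mu\E[T]$.

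The main subtlety is notational rather than analytical. In the paper's setup, $S_{n}$ is defined as the sum of \emph{centered} increments. The noncentered claim should therefore be read as a statement about the raw sums $\sum_{i\le T} f(X_i)$, and I will state this interpretation explicitly before carrying out the decomposition. Once that is done, no step poses a real obstacle, because all the uniform-integrability work is inherited from Theorem~\ref{thm:stopping}.
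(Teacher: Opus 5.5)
Your proposal is correct and follows the paper's proof essentially verbatim: you apply Theorem~\ref{thm:stopping} with $K=1$, use $\varphi_1\equiv 0$ to get $\E[S_T]=0$, and handle $\mu\neq 0$ by centering with $\tilde Y_i = Y_i-\mu$. Your explicit remark that the noncentered claim concerns the raw sums $\sum_{i\le T} f(X_i)$, since $S_n$ is defined via centered increments, is a worthwhile clarification that the paper leaves implicit.
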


\begin{proof}
When $K=1$, the maximum is trivial: $M_n = S_n$.
By Definition~\ref{def:selection_premium},
\[
\varphi_1(u) = \E[u + Y_1] - u = \E[Y_1] = 0
\quad \text{for all } u \in \R.
\]
Applying Theorem~\ref{thm:stopping} with $K=1$ yields
\[
\E[S_T] = \E[M_T]
= \sum_{i=1}^{\infty} \E\bigl[\varphi_1(S_{i-1})\,\ind\{T \ge i\}\bigr]
= 0.
\]
For the general case, write $Y_i = \mu + (Y_i - \mu)$ and apply
the centered result to the increments $\tilde{Y}_i = Y_i - \mu$:
\[
\E\biggl[\sum_{i=1}^T \tilde{Y}_i\biggr] = 0
\quad\Longrightarrow\quad
\E[S_T] = \mu\,\E[T].
\]
\end{proof}

\section{Extension to Unequal Means: The Winner's Curse}\label{sec:unequal_means}

The previous sections analyze the {null (equal-mean)} case, in which
$\E[M_n]=\E[\max_k S_{n,k}]$ is \emph{pure selection bias}.
We now allow heterogeneous means $\mu_1,\dots,\mu_K$. In this setting the
winner's observed score contains a deterministic {drift advantage} plus a
residual {selection premium} coming from random fluctuations.

Recall the centered increments $Y_{i,k}=f_k(X_i)-\mu_k$ and centered walks
$S_{n,k}=\sum_{i=1}^n Y_{i,k}$. Define the (uncentered) cumulative scores
\[
\bar S_{n,k}:=\sum_{i=1}^n f_k(X_i)=n\mu_k+S_{n,k},
\qquad
\hat k_n\in\argmax_{k\in[K]}\bar S_{n,k}.
\]
Let $k^*\in\argmax_k \mu_k$ be a true best model and define the gaps
$\Delta_k:=\mu_{k^*}-\mu_k\ge 0$, with $\Delta_{k^*}=0$.

Consider the {drifted} (relative-to-best) walks
\begin{equation}\label{eq:drifted_walk_def}
R_{n,k}:=S_{n,k}-n\Delta_k,
\qquad
R_n:=(R_{n,1},\dots,R_{n,K}).
\end{equation}
Then
\begin{equation}\label{eq:max_uncentered_decompose}
\max_{k\in[K]}\bar S_{n,k}
=
n\mu_{k^*}+\max_{k\in[K]}R_{n,k}.
\end{equation}
Moreover, $R_n$ evolves by
\[
R_{n,k}=R_{n-1,k}+(Y_{n,k}-\Delta_k),
\]
so the drifted increments are $Y_n-\Delta$.

\begin{definition}[Heterogeneous selection premium]\label{def:phi_hetero}
For $u\in\R^K$, define
\begin{equation}\label{eq:phi_hetero_def_fixed}
\phi_{K,\Delta}(u)
:=
\E\!\left[\max_{k\in[K]}\bigl(u_k+Y_k-\Delta_k\bigr)\right]
-
\max_{k\in[K]}u_k,
\end{equation}
where $Y\stackrel{d}{=}Y_1$ is independent of $u$.
\end{definition}

\begin{remark}[Interpretation: option value in the drifted race]
At drifted state $u$, committing now yields $\max_k u_k$.
Waiting one more observation and then selecting yields
$\E[\max_k(u_k+Y_k-\Delta_k)]$.
Thus $\phi_{K,\Delta}(u)$ is the one-step \emph{option value of waiting} in the
drifted race. When $\Delta\equiv 0$, it reduces to the homogeneous premium
$\varphi_K(u)$.
\end{remark}
\begin{theorem}[Unequal means]\label{thm:winners_curse}
Under \textnormal{(A1)} and \textnormal{(A3)}, let $k^* \in \argmax_k \mu_k$ be the true best model and $\hat{k}_n := \argmax_k \bar{S}_{n,k}$ the selected model. Then:

\begin{enumerate}[label=(\alph*)]
\item {Winner's curse decomposition:}
With $R_n$ defined by \eqref{eq:drifted_walk_def},
\begin{equation}\label{eq:winners_curse_decomp_fixed}
\E\Big[\max_{k\in[K]}\bar S_{n,k}\Big]
=
n\mu_{k^*}
+
\sum_{i=1}^n \E\Big[\phi_{K,\Delta}\big(R_{i-1}\big)\Big].
\end{equation}
Equivalently,
\begin{equation}\label{eq:drifted_max_identity_fixed}
\E\Big[\max_{k\in[K]}R_{n,k}\Big]
=
\sum_{i=1}^n \E\Big[\phi_{K,\Delta}\big(R_{i-1}\big)\Big].
\end{equation}

\item {Optimism of the selected model:} The observed score overestimates the true mean of the selected model:
\begin{equation}\label{eq:optimism}
\E\biggl[\frac{\bar{S}_{n,\hat{k}_n}}{n} - \mu_{\hat{k}_n}\biggr] = \frac{1}{n}\E\bigl[S_{n,\hat{k}_n}\bigr] \geq 0.
\end{equation}

\item {Bound by the null case:} The heterogeneous selection premium is dominated by the homogeneous case:
\begin{equation}\label{eq:hetero_bound}
0 \leq \E\bigl[\max_k\bigl(R_{n,k}\bigr)\bigr] \leq \E\bigl[\max_k S_{n,k}\bigr].
\end{equation}

\item {Vanishing selection premium:} If $k^*$ is unique, then
\begin{equation}\label{eq:vanishing_premium}
\frac{1}{n}\E\bigl[\max_k\bigl(R_{n,k}\bigr)\bigr] \to 0 \quad \text{as } n \to \infty.
\end{equation}
\end{enumerate}
\end{theorem}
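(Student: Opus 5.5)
Parts (a)–(d) need different tools, so we handle them separately.

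\emph{Part (a)} is the conditioning-and-telescoping argument of Theorem~\ref{thm:main} applied to the drifted walk. We note that $R_i = R_{i-1} + (Y_i - \Delta)$, where $R_{i-1}$ is $\cF_{i-1}$-measurable and $Y_i \perp \cF_{i-1}$ by (A1). Hence
\[
\E\bigl[\max_k R_{i,k} \mid \cF_{i-1}\bigr] = h(R_{i-1}), \qquad h(u) = \E\bigl[\max_k(u_k + Y_k - \Delta_k)\bigr],
\]
which is finite by (A3). Subtracting $\max_k R_{i-1,k}$, taking expectations, and summing from $i=1$ to $n$ with $R_0 = \mathbf 0$ gives \eqref{eq:drifted_max_identity_fixed}. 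Adding $n\mu_{k^*}$ via \eqref{eq:max_uncentered_decompose} then gives \eqref{eq:winners_curse_decomp_fixed}. Integrability of every term follows from $|\max_k R_{n,k}| \le \sum_i \max_k|Y_{i,k}| + n\max_k \Delta_k$. Centering $\E[Y_1] = 0$ holds automatically, since $Y$ is defined by subtracting $\mu_k$.

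\emph{Part (b).} The identity is immediate, because $\bar S_{n,\hat k_n}/n - \mu_{\hat k_n} = S_{n,\hat k_n}/n$ pointwise. The inequality is the substantive claim. We plan to write $S_{n,\hat k} = \bar S_{n,\hat k} - n\mu_{\hat k}$ and use $\bar S_{n,\hat k} \ge \bar S_{n,k^*}$, which gives
\[
S_{n,\hat k} \ge S_{n,k^*} + n(\mu_{k^*} - \mu_{\hat k}) \ge S_{n,k^*}.
\]
Taking expectations, with $\E[S_{n,k^*}] = 0$ because $k^*$ is deterministic, yields $\E[S_{n,\hat k_n}] \ge 0$. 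We must fix a measurable tie-breaking rule for $\hat k_n$, but this plays no role.

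\emph{Part (c).} The lower bound is Jensen's inequality: $\E\max_k R_{n,k} \ge \max_k \E[R_{n,k}] = \max_k(-n\Delta_k) = 0$, using $\Delta_{k^*} = 0$. The upper bound is pointwise: since $\Delta_k \ge 0$, we have $R_{n,k} \le S_{n,k}$ for every $k$, so $\max_k R_{n,k} \le M_n$.

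\emph{Part (d)} is the main obstacle. Here we use the strong law of large numbers, which applies since $\E|Y_{1,k}| < \infty$ by (A3). It gives $S_{n,k}/n \to 0$ a.s., hence
\[
\frac{\max_k R_{n,k}}{n} \to \max_k(-\Delta_k) = 0 \quad \text{a.s.}
\]
To pass to expectations we need uniform integrability. We dominate using
\[
\Bigl|\frac{\max_k R_{n,k}}{n}\Bigr| \le \max_k \frac{|S_{n,k}|}{n} + \max_k \Delta_k \le \frac{1}{n}\sum_{i=1}^n \max_k |Y_{i,k}| + \max_k \Delta_k.
\]
The averages $\frac1n\sum_{i=1}^n \max_k|Y_{i,k}|$ of i.i.d.\ integrable variables form a uniformly integrable family: they converge in $L^1$ by the $L^1$ strong law, or one can use de la Vallée-Poussin. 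Alternatively, part (c) gives $0 \le \E\max_k R_{n,k} \le \E M_n$, and $\E M_n/n \le \sum_k \E|S_{n,k}|/n \to 0$ by the $L^1$ law of large numbers for each coordinate. This second route is cleaner and we would use it.

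It shows that uniqueness of $k^*$ is not actually needed for the $o(n)$ statement. We would remark that uniqueness matters only for sharper rates, e.g.\ boundedness of $\E\max_k R_{n,k}$ under second moments, since then the leader separates at linear speed.
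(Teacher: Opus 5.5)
Your proof is correct, and for (a) and (c) it is the paper's argument. In (a) you telescope the one-step conditional expectation of the drifted walk. In (c) you use the pointwise bound $R_{n,k}\le S_{n,k}$ together with Jensen.

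In (b) the paper applies Jensen, $\E[\max_k \bar S_{n,k}]\ge \max_k\E[\bar S_{n,k}]=n\mu_{k^*}$, and then uses $\mu_{\hat k_n}\le \mu_{k^*}$. Your pathwise inequality $S_{n,\hat k_n}\ge S_{n,k^*}+n(\mu_{k^*}-\mu_{\hat k_n})$ is the same comparison against the fixed arm $k^*$, made before taking expectations. Both routes give $\E[S_{n,\hat k_n}]\ge n\,\E[\Delta_{\hat k_n}]\ge 0$.

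The substantive difference is in (d). The paper's proof ends with the almost sure limit $\frac1n\max_k R_{n,k}\to 0$, which does not by itself give the stated convergence of $\frac1n\E[\max_k R_{n,k}]$. Your sandwich $0\le \frac1n\E[\max_k R_{n,k}]\le \frac1n\E[M_n]\le \sum_k\frac1n\E|S_{n,k}|\to 0$ supplies exactly this missing step. It uses (c) and the $L^1$ law of large numbers; your uniform-integrability alternative works as well.

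You are also right that uniqueness of $k^*$ plays no role in (d); the paper's argument does not use it either. Your remark about sharper rates is also correct: uniqueness is what makes $\E[\max_k R_{n,k}]$ bounded under second moments. With $D_{n,k}=S_{n,k}-S_{n,k^*}$, Cauchy--Schwarz and Chebyshev give $\E[(D_{n,k}-n\Delta_k)^+]\le \Var(Y_{1,k}-Y_{1,k^*})/\Delta_k$. Since $\E[S_{n,k^*}]=0$, this keeps $\E[\max_k R_{n,k}]$ bounded whenever every $\Delta_k$ with $k\ne k^*$ is positive.
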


\begin{proof}
{(a)} Similar to \ref{thm:main}, by telescoping.

{(b)} Note that $S_{n,\hat{k}_n} = \max_k \bar{S}_{n,k} - n\mu_{\hat{k}_n}$, and:
\begin{align*}
\E[S_{n,\hat{k}_n}] &= \E[\max_k \bar{S}_{n,k}] - n\E[\mu_{\hat{k}_n}]\\
&\geq \max_k \E[\bar{S}_{n,k}] - n\E[\mu_{\hat{k}_n}] \quad \text{(Jensen)}\\
&= n\mu_{k^*} - n\E[\mu_{\hat{k}_n}] \geq 0.
\end{align*}
The last inequality holds because $\mu_{\hat{k}_n} \leq \mu_{k^*}$ a.s.

{(c)} Since $\Delta_k \geq 0$: $S_{n,k} - n\Delta_k \leq S_{n,k}$ for all $k$, hence $\max_k(S_{n,k} - n\Delta_k) \leq \max_k S_{n,k}$. For non-negativity, let $V_k:=S_{n,k}-n\Delta_k$.
\[
\E\!\left[\max_{k\in[K]} V_k\right]\overset{Jensen}{\ge} \max_{k\in[K]}\E[V_k]
= \max_{k\in[K]}(\E[S_{n,k}]-n\Delta_k)
= \max_{k\in[K]}(-n\Delta_k)=0,
\]
since $\E[S_{n,k}]=0$ and $\min_k\Delta_k=\Delta_{k^*}=0$.

{(d)} By the law of large numbers, $S_{n,k}/n \to 0$ a.s.\ for all $k\in\{1,\dots, K\}$. Hence:
\[
\frac{1}{n}\max_{k\in [K]}(S_{n,k} - n\Delta_k) = \max_{k\in [K]}\biggl(\frac{S_{n,k}}{n} - \Delta_k\biggr) \to \max_{k\in[K]}(-\Delta_k) = 0 \quad \text{a.s.}
\]
\end{proof}

Part~(c) shows that heterogeneity can only {reduce} selection bias: if one model is truly better, the competition phase ends sooner and less selection premium can accrue. Part~(d) further indicates that this excess optimism is {transient} in the sense that the bias per observation vanishes as $n\to\infty$. Since heterogeneity cannot increase selection bias, for the remainder of the paper, we work under the null (equal-mean) case $\mu_1=\cdots=\mu_K$, which also yields cleaner notation.

\section{Decay of the Selection Premium}\label{sec:decay}

The identity $\E[M_n] = \sum_{i=1}^n \E[\varphi_K(S_{i-1})]$ expresses the total selection bias as a sum of per-step premiums. This section establishes how these premiums decay---first exactly for Gaussian increments, then asymptotically for general distributions.

\begin{definition}[Normalized Selection Premium]\label{def:normalized_premium}
Define the \emph{normalized selection premium at step $i$} as
\begin{equation}
\psi_K(i) := \frac{\E[\varphi_K(S_{i-1})]}{\varphi_K(\mathbf{0})}, \quad i = 1, 2, \ldots
\end{equation}
By Proposition~\ref{prop:phi_properties}(v), $\psi_K(i) \in [0, 1]$ for all $i$. Note that $\psi_K(1) = 1$.
\end{definition}

\subsection{Exact decay for Gaussian increments}

\begin{theorem}[Gaussian decay]\label{thm:gaussian_decay}
Let $Y_i = (Y_{i,1},\ldots,Y_{i,K})$ be i.i.d.\ Gaussian vectors with $\E[Y_i] = \mathbf{0}$ and $\mathrm{Cov}(Y_{i,j}, Y_{i,k}) = \Sigma_{jk}$. Then for all $i \geq 1$:
\begin{enumerate}[label=(\alph*)]
\item The expected maximum satisfies:
\begin{equation}\label{eq:gaussian_Mn}
\E[M_n] = \sqrt{n}\,\E\left[\max_{k\in[K]} Z_{k}\right],
\end{equation}
where $Z = (Z_1,\ldots,Z_K) \sim N(\mathbf{0}, \Sigma)$.

\item The per-step selection premium is:
\begin{equation}\label{eq:gaussian_premium_exact}
\E[\varphi_K(S_{i-1})] = \E\left[\max_{k\in[K]} Z_{k}\right]\bigl(\sqrt{i} - \sqrt{i-1}\bigr), \quad i \geq 1.
\end{equation}

\item The normalized selection premium is:
\begin{equation}\label{eq:psi_gaussian}
\psi_K(i) := \frac{\E[\varphi_K(S_{i-1})]}{\varphi_K(\mathbf{0})} = \sqrt{i} - \sqrt{i-1} \sim \frac{1}{2\sqrt{i}}\quad (i \to \infty),
\end{equation}
which is independent of $K$ and $\Sigma$.

\item The map $n \mapsto \E[M_n]$ is concave: for all $n \geq 1$,
\begin{equation}\label{eq:concavity}
\E[\varphi_K(S_n)] \leq \E[\varphi_K(S_{n-1})].
\end{equation}
\end{enumerate}
\end{theorem}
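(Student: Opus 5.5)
The approach is to compute $\E[M_n]$ in closed form using Gaussian scaling, and then read (b)--(d) off the telescoping relation established in the proof of Theorem~\ref{thm:main}, namely $\E[\varphi_K(S_{i-1})] = \E[M_i]-\E[M_{i-1}]$.

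For (a), the increments $Y_1,\dots,Y_n$ are i.i.d.\ $N(\mathbf{0},\Sigma)$. Hence $S_n=\sum_{i\le n}Y_i \sim N(\mathbf{0}, n\Sigma)$, so $S_n \stackrel{d}{=} \sqrt{n}\,Z$ with $Z\sim N(\mathbf{0},\Sigma)$. Because $\max_k$ is positively homogeneous, $M_n=\max_k S_{n,k} \stackrel{d}{=} \sqrt{n}\max_k Z_k$. Taking expectations gives \eqref{eq:gaussian_Mn}. Integrability is immediate, since Gaussian coordinates satisfy (A3). The case $n=0$ is also consistent, since $M_0=0$.

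For (b), the proof of Theorem~\ref{thm:main} shows that each summand equals the increment $\E[M_i]-\E[M_{i-1}]$. Substituting (a) for both $i$ and $i-1$ gives
\[
\E[\varphi_K(S_{i-1})] = \E\Bigl[\max_k Z_k\Bigr]\bigl(\sqrt{i}-\sqrt{i-1}\bigr).
\]
For (c), the denominator is $\varphi_K(\mathbf{0})=\E[\max_k Y_{1,k}]=\E[\max_k Z_k]$, so the ratio is exactly $\sqrt{i}-\sqrt{i-1}$. Rationalizing gives
\[
\sqrt{i}-\sqrt{i-1}=\frac{1}{\sqrt{i}+\sqrt{i-1}}\sim\frac{1}{2\sqrt{i}}.
\]
The result is visibly free of $K$ and $\Sigma$.

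The only subtlety is degeneracy. If $\E[\max_k Z_k]=0$, for instance when all coordinates coincide almost surely or $\Sigma=0$, the ratio in (c) is $0/0$. I would add the harmless caveat that (c) assumes $\varphi_K(\mathbf{0})>0$, which holds unless $Z_k$ is almost surely the same for all $k$. Since $\max_k Z_k\ge Z_1$ with $\E[Z_1]=0$, equality of expectations forces $\max_k Z_k = Z_1$ almost surely, and by symmetry of the indices $\max_k Z_k = Z_j$ almost surely for every $j$, so all coordinates agree.

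For (d), the claim follows from (b), because $\sqrt{n+1}-\sqrt{n}\le\sqrt{n}-\sqrt{n-1}$. This inequality holds since $x\mapsto 1/(\sqrt{x}+\sqrt{x-1})$ is decreasing, or equivalently because $\sqrt{\cdot}$ is concave. Multiplying by $\E[\max_k Z_k]\ge 0$ preserves the inequality, so $\E[\varphi_K(S_n)]\le\E[\varphi_K(S_{n-1})]$, which is concavity of $n\mapsto\E[M_n]$.

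No step is a genuine obstacle. The only point needing care is the distributional identity $S_n\stackrel{d}{=}\sqrt{n}Z$ in (a), which rests on closure of Gaussians under independent sums together with homogeneity of the max.
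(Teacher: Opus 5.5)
Your proposal is correct and follows the paper's proof: Gaussian scaling $S_n \stackrel{d}{=} \sqrt{n}\,Z$ for (a), the telescoping identity $\E[\varphi_K(S_{i-1})] = \E[M_i]-\E[M_{i-1}]$ for (b), division by $\varphi_K(\mathbf{0}) = \E[\max_k Z_k]$ for (c), and concavity of $\sqrt{\cdot}$ for (d). Your caveat is a valid refinement that the paper leaves implicit: (c) needs $\E[\max_k Z_k] > 0$, which fails exactly when all coordinates of $Z$ coincide almost surely.
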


\begin{proof}
{(a)} By the additive property of Gaussian vectors, $S_{n,k} = \sum_{i=1}^n Y_{i,k} \sim N(0, n\Sigma_{kk})$, and more generally $(S_{n,1},\ldots,S_{n,K}) \sim N(\mathbf{0}, n\Sigma)$. Therefore $S_n \stackrel{d}{=} \sqrt{n}\,Z$ where $Z \sim N(\mathbf{0}, \Sigma)$, and $M_n = \max_k S_{n,k} \stackrel{d}{=} \sqrt{n}\max_k Z_k$. Taking expectations gives \eqref{eq:gaussian_Mn}.

{(b)} By part (a):
\begin{align*}
\E[\varphi_K(S_{i-1})] &= \E[M_i] - \E[M_{i-1}]\\
&= \E[\max_k Z_k]\bigl(\sqrt{i} - \sqrt{i-1}\bigr).
\end{align*}

{(c)} Follows directly from (b).

{(d)} The function $t \mapsto \sqrt{t}$ is concave, so $\sqrt{i+1} - \sqrt{i} \leq \sqrt{i} - \sqrt{i-1}$, which gives $\E[\varphi_K(S_i)] \leq \E[\varphi_K(S_{i-1})]$ by part (b).
\end{proof}

In \ref{eq:psi_gaussian}, we used the normalized selection premium function. The definition is as follows.

\begin{remark}[Universality of the normalized decay]
The formula $\psi_K(i) = \sqrt{i} - \sqrt{i-1}$ depends on neither the number of arms $K$ nor the covariance structure $\Sigma$. It is a purely geometric consequence of the $\sqrt{n}$-scaling of Gaussian random walks. In particular:
\begin{align*}
\psi_K(2) &= \sqrt{2} - 1 \approx 0.414, \qquad &\psi_K(5) &\approx 0.236,\\
\psi_K(10) &\approx 0.162, &\psi_K(26) &\approx 0.099.
\end{align*}
The $\alpha$-decay time $\tau_\alpha = \min\{i : \psi_K(i) < \alpha\}$ satisfies $\tau_\alpha \approx 1/(4\alpha^2)$ for small $\alpha$. In particular, $\tau_{0.1} = 26$ exactly (the smallest $i$ with $\sqrt{i} - \sqrt{i-1} < 0.1$).
\end{remark}

\subsection{Asymptotic decay for general distributions}

\begin{theorem}[Asymptotic growth---finite variance]\label{thm:general_qualitative}
Assume \textnormal{(A1)--(A3)} with $\sigma_k^2 = \Var(Y_{1,k}) < \infty$ for each $k$.
Let $\Sigma = \Cov(Y_1)$ and $c = \E[\max_k Z_k]$ where $Z \sim N(\mathbf{0}, \Sigma)$.
Then:
\begin{enumerate}[label=(\alph*)]
\item {(Square-root growth.)}
$\E[M_n] = c\sqrt{n} + o(\sqrt{n}).$

\item {(Bias concentration.)}
For any $0 < \alpha < 1$,
\begin{equation}\label{eq:bias_concentration}
\frac{\sum_{i=1}^{\lfloor \alpha n \rfloor} \E[\varphi_K(S_{i-1})]}{\E[M_n]} \to \sqrt{\alpha}
\quad \text{as } n \to \infty.
\end{equation}
That is, the first $\alpha$-fraction of steps generates a $\sqrt{\alpha}$-fraction of total bias.
\end{enumerate}
\end{theorem}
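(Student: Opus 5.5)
For part (a), I would prove convergence of $\E[M_n/\sqrt{n}]$ to $c$ by combining the multivariate CLT with uniform integrability. By (A1)--(A2) and finite variances, the multivariate CLT gives $S_n/\sqrt{n} \Rightarrow Z \sim N(\mathbf{0},\Sigma)$. The map $u \mapsto \max_k u_k$ is continuous (indeed $1$-Lipschitz in $\ell_\infty$), so the continuous mapping theorem yields $M_n/\sqrt{n} \Rightarrow \max_k Z_k$. To upgrade this to convergence of expectations, I would use the bound
\[
|M_n| \le \max_k |S_{n,k}| \le \sum_{k=1}^K |S_{n,k}|
\]
together with
\[
\E\bigl[(S_{n,k}/\sqrt{n})^2\bigr] = \sigma_k^2 .
\]
It follows that $\sup_n \E[(M_n/\sqrt{n})^2] \le K\sum_k \sigma_k^2 < \infty$. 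A uniformly bounded second moment implies uniform integrability of $\{M_n/\sqrt{n}\}$, hence $\E[M_n]/\sqrt{n} \to \E[\max_k Z_k] = c$, which is (a).

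For part (b), I would use the telescoping identity from the proof of Theorem~\ref{thm:main}: $\E[\varphi_K(S_{i-1})] = \E[M_i]-\E[M_{i-1}]$ with $M_0=0$. The numerator is therefore exactly $\E[M_{\lfloor \alpha n\rfloor}]$. By (a), applied along $m=\lfloor \alpha n\rfloor \to \infty$, this equals $c\sqrt{\alpha n}+o(\sqrt{n})$, while the denominator is $c\sqrt{n}+o(\sqrt{n})$. Provided $c>0$, the ratio tends to $\sqrt{\alpha}$.

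The main obstacle is the degenerate case $c=0$, in which the ratio is $0/0$ asymptotically and the claim needs interpretation. I would show that $c = \E[\max_k Z_k] = 0$ holds iff $\max_k Z_k = Z_1$ a.s. after a suitable choice of index. The reasoning is that $\max_k Z_k \ge Z_j$ and $\E[Z_j]=0$, so $c=0$ forces $\max_k Z_k = Z_j$ a.s. for every $j$, i.e., all $Z_k$ coincide a.s., i.e., $\Sigma$ has all entries equal (all arms' centered increments identical in the $L^2$ sense). In that case $Y_{1,k}=Y_{1,1}$ a.s. for all $k$, so $M_n = S_{n,1}$ and $\E[M_n]=0$ for every $n$. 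The statement in (b) then must be read under the nondegeneracy assumption $c>0$ (equivalently, not all coordinates of $Y_1$ are a.s. equal), which I would state explicitly. Under that assumption the computation above completes the proof, and no rate in the CLT is needed since only $o(\sqrt{n})$ errors enter.
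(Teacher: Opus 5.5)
Your proposal is correct and follows the paper's proof essentially step for step. It uses the multivariate CLT plus continuous mapping, obtains uniform integrability from the same bound $\sup_n \E[(M_n/\sqrt{n})^2] \le K\sum_k\sigma_k^2$, and then uses the telescoping identity to identify the numerator in (b) as $\E[M_{\lfloor \alpha n\rfloor}]$. Your treatment of the degenerate case $c=0$ is a correct refinement that the paper leaves implicit when it divides by $\E[M_n]$: this case occurs exactly when all coordinates of $Y_1$ coincide a.s., which forces $\E[M_n]= 0$ for every $n$.
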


\begin{proof}
\textbf{(a)} By the multivariate CLT, $S_n/\sqrt{n} \Rightarrow Z \sim N(\mathbf{0}, \Sigma)$.
Since $h(x) = \max_k x_k$ is Lipschitz, the continuous mapping theorem gives
$M_n/\sqrt{n} \Rightarrow \max_k Z_k$.
For convergence of expectations, we verify uniform integrability:
\[
\E\bigl[(M_n/\sqrt{n})^2\bigr]
\leq \E\Bigl[\bigl(\textstyle\sum_k |S_{n,k}|/\sqrt{n}\bigr)^2\Bigr]
\leq K \sum_k \sigma_k^2 = K\,\mathrm{tr}(\Sigma) < \infty
\]
uniformly in $n$. Hence $\E[M_n/\sqrt{n}] \to c$.

\textbf{(b)} By part (a) applied at $\lfloor \alpha n \rfloor$:
\[
\sum_{i=1}^{\lfloor \alpha n \rfloor} \E[\varphi_K(S_{i-1})]
= \E[M_{\lfloor \alpha n \rfloor}]
= c\sqrt{\alpha n} + o(\sqrt{n}).
\]
Dividing by $\E[M_n] = c\sqrt{n} + o(\sqrt{n})$ yields the result.
\end{proof}

Equation~\eqref{eq:bias_concentration} makes precise the claim that
selection bias is a small-sample phenomenon.
For example, with $\alpha = 0.01$ (the first $1\%$ of steps),
the fraction of accumulated bias is approximately $\sqrt{0.01} = 0.1$: ten percent
of the total bias is generated in the first percent of the evaluation.
This concentration is a universal consequence of the $\sqrt{n}$-growth
and holds for {any} increment distribution with finite variance.

Monte Carlo computation of $\psi_K(i)$ for $i = 1, \ldots, 200$ across multiple configurations reveals the following (Figure~\ref{fig:premium_decay}):

\begin{figure}[h!]
\centering
\includegraphics[width=\textwidth]{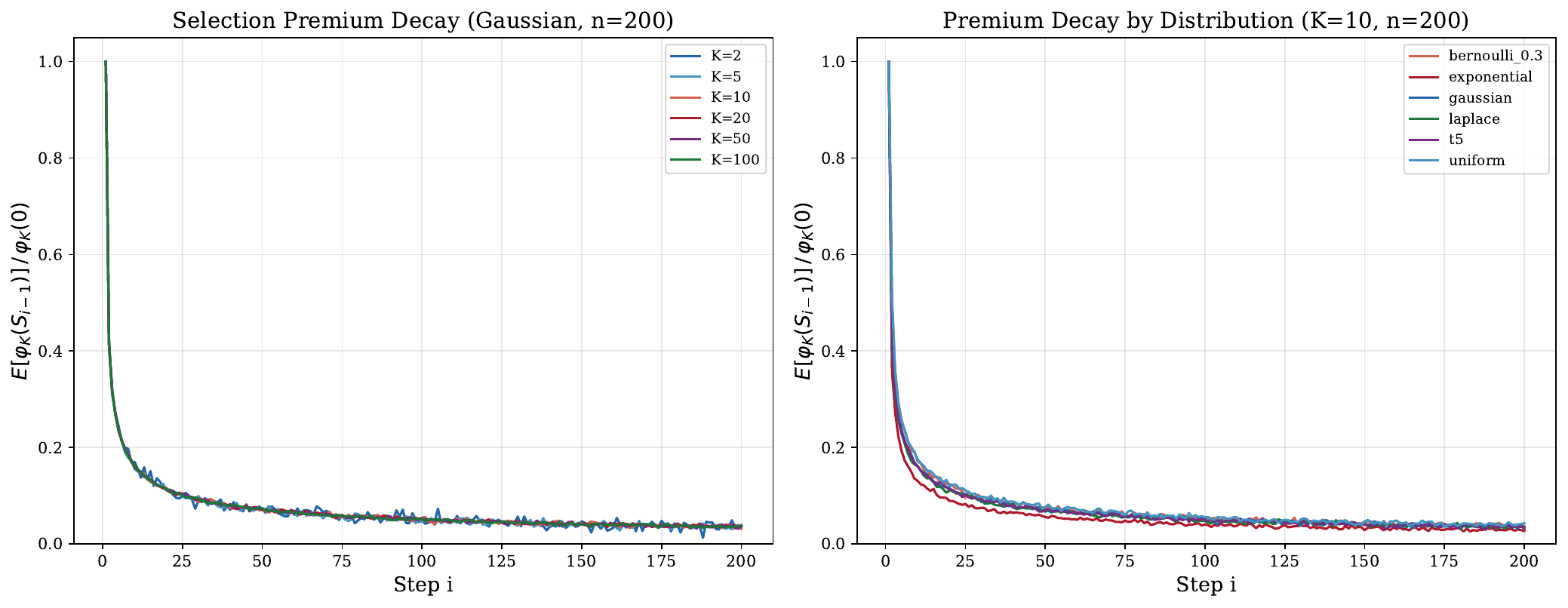}
\caption{{Selection premium decay.} Left: Normalized premium $\psi_K(i)$ for Gaussian arms with various $K$. Right: Distribution comparison for $K = 10$. The normalized curves are nearly identical across distributions, suggesting a universal decay law.}
\label{fig:premium_decay}
\end{figure}

\begin{enumerate}[label=(\roman*)]
\item {Rapid initial decay.} For $K = 10$ Gaussian arms ($\sigma = 1$), $\tau_{0.1} \approx 26$: the selection premium drops to $10\%$ of its initial value within approximately $26$ steps. 

\item {Approximately distribution-free shape.} When normalized by $\varphi_K(\mathbf{0})$, the decay curves are nearly identical across Gaussian, Student-$t(5)$, Exponential, Uniform, and Laplace distributions (Figure~\ref{fig:premium_decay}, right panel). This suggests a universal decay law that depends on $K$ but not on the increment distribution.

\end{enumerate}

The practical implication is as follows: selection bias accumulates primarily during approximately the first $\sqrt{n}$ steps, when models are still competing. Once a leader emerges, $\varphi_K(S_{i-1})$ becomes negligible and bias growth effectively stops. This is a ``competition--dominance transition.''

\subsection{Nonasymptotic bounds under sub-Gaussian increments}\label{sec:subgaussian}
The preceding results describe the qualitative and asymptotic behavior of the selection premium. We now complement these asymptotic statements with a {nonasymptotic} upper bound that holds uniformly in $n$ and $K$ under a sub-Gaussian moment condition.

The sub-Gaussian assumption allows us to control the moment generating function of $S_n$ and obtain a maximal inequality of order $\sqrt{n\log K}$. 
This bound recovers the $\sqrt{n}$ growth at fixed $K$ and the $\sqrt{\log K}$ dependence at fixed $n$, thereby unifying the time-scaling and model-scaling behaviors in a single inequality. 
\begin{definition}[sub-Gaussian]
    A random vector $Y_1\in\R^K$ is \emph{sub-Gaussian with proxy covariance} $\Sigma\succeq 0$ if for all $t\in\R^K$,
    \begin{equation}\label{eq:sg_def}
    \E\exp(\langle t, Y_1\rangle)\ \le\ \exp\Big(\frac{1}{2}\,t^\top \Sigma t\Big).
\end{equation}
\end{definition}

\begin{theorem}[Maximal selection bias under sub-Gaussian increments]\label{thm:subg_max}
Assume \textnormal{(A1)}--\textnormal{(A2)} and that $Y_1$ satisfies \eqref{eq:sg_def} for some $\Sigma\succeq 0$. Let $\lambda_{\max}(\Sigma)$ denote the largest eigenvalue of $\Sigma$. Then for all $n\ge1$,
\begin{equation}\label{eq:subg_max_bound}
\E[M_n]\ =\ \E\Big[\max_{1\le k\le K} S_{n,k}\Big]
\ \le\ \sqrt{2n\,\lambda_{\max}(\Sigma)\,\log K}.
\end{equation}
Consequently, the average per-step premium obeys
\[
\frac1n \sum_{i=1}^n \E[\varphi_K(S_{i-1})]
\le \sqrt{\frac{2\lambda_{\max}(\Sigma)\log K}{n}},
\]
and hence tends to $0$ at rate $n^{-1/2}$ for fixed $K$.
\end{theorem}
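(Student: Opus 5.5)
The plan is the standard exponential-moment (soft-max) argument applied to the vector $S_n$; the per-step statement then follows from Theorem~\ref{thm:main}. First I control the moment generating function of each coordinate of $S_n$. Fix $\lambda>0$ and $k\in[K]$. Take $t=\lambda e_k$ in \eqref{eq:sg_def}, where $e_k$ is the $k$th standard basis vector. This gives
\[
\E\exp(\lambda Y_{1,k})\le \exp\bigl(\tfrac12\lambda^2\Sigma_{kk}\bigr)\le \exp\bigl(\tfrac12\lambda^2\lambda_{\max}(\Sigma)\bigr),
\]
since $\Sigma_{kk}=e_k^\top\Sigma e_k\le\lambda_{\max}(\Sigma)$. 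By (A1) the increments $Y_1,\dots,Y_n$ are i.i.d. The MGF of $S_{n,k}$ therefore factorizes, and
\[
\E\exp(\lambda S_{n,k})\le\exp\bigl(\tfrac12 n\lambda^2\lambda_{\max}(\Sigma)\bigr).
\]

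Next I apply Jensen's inequality to the convex function $\exp$, and then bound the max by a sum:
\[
\exp\bigl(\lambda\E[M_n]\bigr)\le \E\exp(\lambda M_n)=\E\max_k e^{\lambda S_{n,k}}\le\sum_{k=1}^K \E e^{\lambda S_{n,k}}\le K\exp\bigl(\tfrac12 n\lambda^2\lambda_{\max}(\Sigma)\bigr).
\]
Taking logarithms gives $\E[M_n]\le \frac{\log K}{\lambda}+\frac{n\lambda\lambda_{\max}(\Sigma)}{2}$ for every $\lambda>0$. I then optimize by choosing $\lambda=\sqrt{2\log K/(n\lambda_{\max}(\Sigma))}$, which yields exactly $\sqrt{2n\lambda_{\max}(\Sigma)\log K}$.

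Three edge cases need checking. If $K=1$, the bound reads $0$; this is correct because $\E[M_n]=\E[S_{n,1}]=0$ by (A2), and the optimization step is replaced by letting $\lambda\to\infty$. If $\lambda_{\max}(\Sigma)=0$, the sub-Gaussian condition forces $Y_1=0$ a.s., so both sides vanish; alternatively, let $\lambda\to\infty$ in the display. Integrability of $M_n$ is automatic, because sub-Gaussianity gives finite moments of all orders.

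Finally, by Theorem~\ref{thm:main}, $\sum_{i=1}^n\E[\varphi_K(S_{i-1})]=\E[M_n]$. Dividing the bound by $n$ gives the stated average per-step premium bound, which is of order $n^{-1/2}$ for fixed $K$. I expect no real obstacle. The only points needing care are the Jensen step, which requires that $\E[M_n]$ be finite, and the degenerate cases of the choice of $\lambda$.
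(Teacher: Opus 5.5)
Your proof is correct and follows essentially the paper's route: exponential moments, bounding the max by the sum over the $K$ coordinates, tensorizing the MGF with $t=\lambda e_k$ and $\Sigma_{kk}\le\lambda_{\max}(\Sigma)$, then optimizing $\lambda$. The only cosmetic difference is that you apply Jensen to $\exp$, where the paper uses the log-sum-exp bound followed by Jensen for $\log$; your treatment of the degenerate cases and of integrability fills in points the paper leaves implicit. One small slip: for $K=1$ the bound $\tfrac12 n\lambda\lambda_{\max}(\Sigma)$ tends to $0$ as $\lambda\to0^{+}$, not as $\lambda\to\infty$, although your direct argument $\E[M_n]=\E[S_{n,1}]=0$ already settles that case.
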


\begin{proof}
Fix $\theta>0$. By the inequality $\max_k x_k\le \theta^{-1}\log\sum_{k=1}^K e^{\theta x_k}$,
\[
\E[M_n]\le \frac{1}{\theta}\,\E\log\Big(\sum_{k=1}^K e^{\theta S_{n,k}}\Big)
\le \frac{1}{\theta}\,\log\E\Big(\sum_{k=1}^K e^{\theta S_{n,k}}\Big),
\]
where the last step is Jensen since $\log$ is concave.
Thus
\begin{equation}\label{eq:mgf_sum}
\E[M_n]\le \frac{1}{\theta}\log\Big(\sum_{k=1}^K \E e^{\theta S_{n,k}}\Big).
\end{equation}
Now $S_n=\sum_{i=1}^n Y_i$ is sub-Gaussian with proxy covariance $n\Sigma$ because the mgf bound tensorizes over independent sums:
for any $t\in\R^K$,
\[
\E e^{\langle t,S_n\rangle}
=\prod_{i=1}^n \E e^{\langle t,Y_i\rangle}
\le \exp\Big(\frac{n}{2}\,t^\top\Sigma t\Big).
\]
Taking $t=\theta e_k$ yields
\[
\E e^{\theta S_{n,k}}\le \exp\Big(\frac{n}{2}\,\theta^2\,e_k^\top\Sigma e_k\Big)
\le \exp\Big(\frac{n}{2}\,\theta^2\,\lambda_{\max}(\Sigma)\Big).
\]
Plugging into \eqref{eq:mgf_sum} gives
\[
\E[M_n]\le \frac{1}{\theta}\log\Big(K\exp\big(\tfrac{n}{2}\theta^2\lambda_{\max}(\Sigma)\big)\Big)
= \frac{\log K}{\theta}+\frac{n}{2}\theta\lambda_{\max}(\Sigma).
\]
Optimize over $\theta>0$ by choosing $\theta=\sqrt{\frac{2\log K}{n\lambda_{\max}(\Sigma)}}$, which yields \eqref{eq:subg_max_bound}. The remaining statements follow from the identity $\E[M_n]=\sum_{i=1}^n \E[\varphi_K(S_{i-1})]$ and dividing by $n$.
\end{proof}

\begin{figure}[h!]
\centering

\begin{subfigure}[t]{0.49\textwidth}
  \centering
  \includegraphics[width=\textwidth]{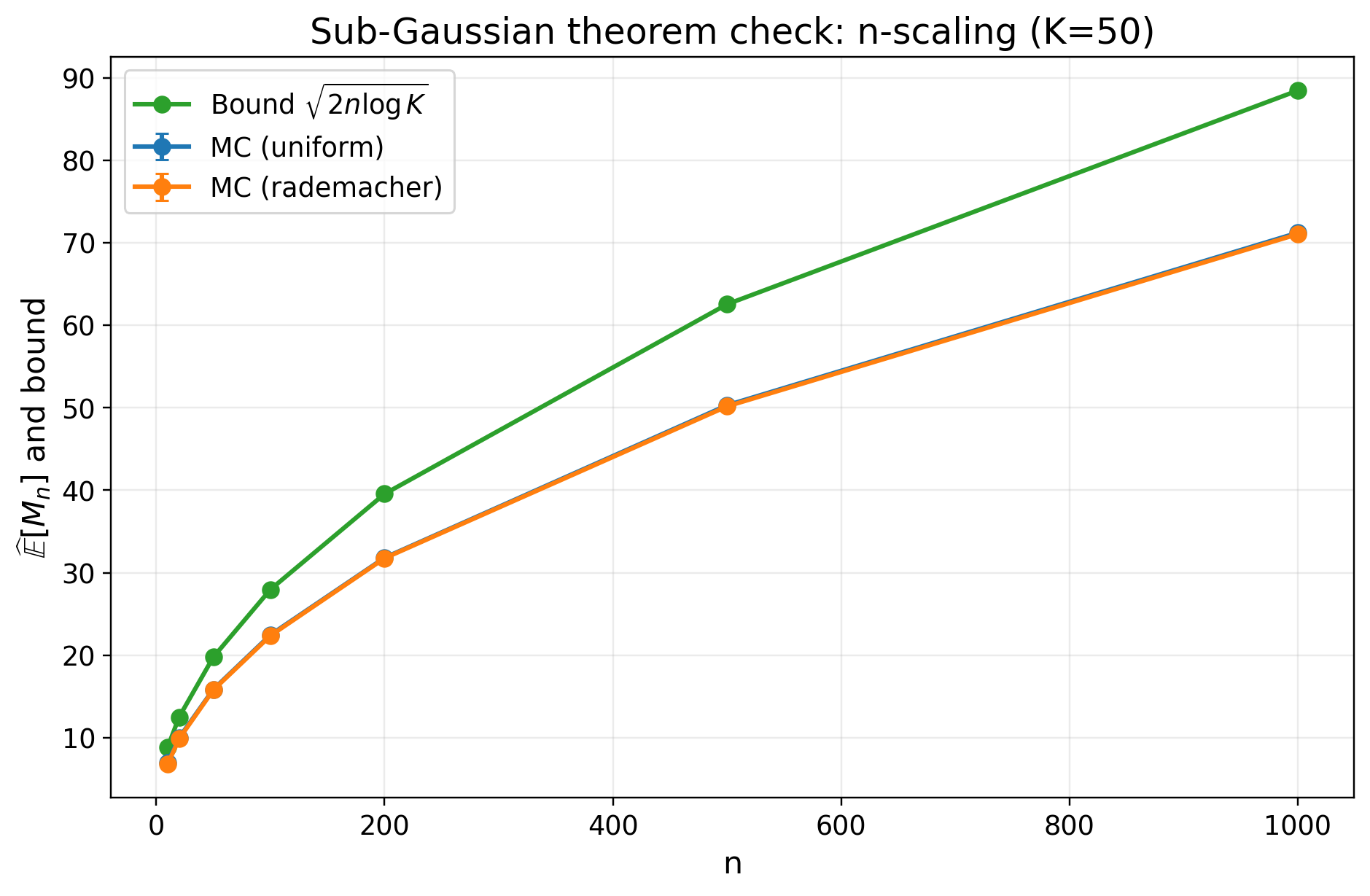}
  \caption{\(n\)-scaling at fixed \(K=50\).}
  \label{fig:subg_n_scaling_K50}
\end{subfigure}
\hfill
\begin{subfigure}[t]{0.49\textwidth}
  \centering
  \includegraphics[width=\textwidth]{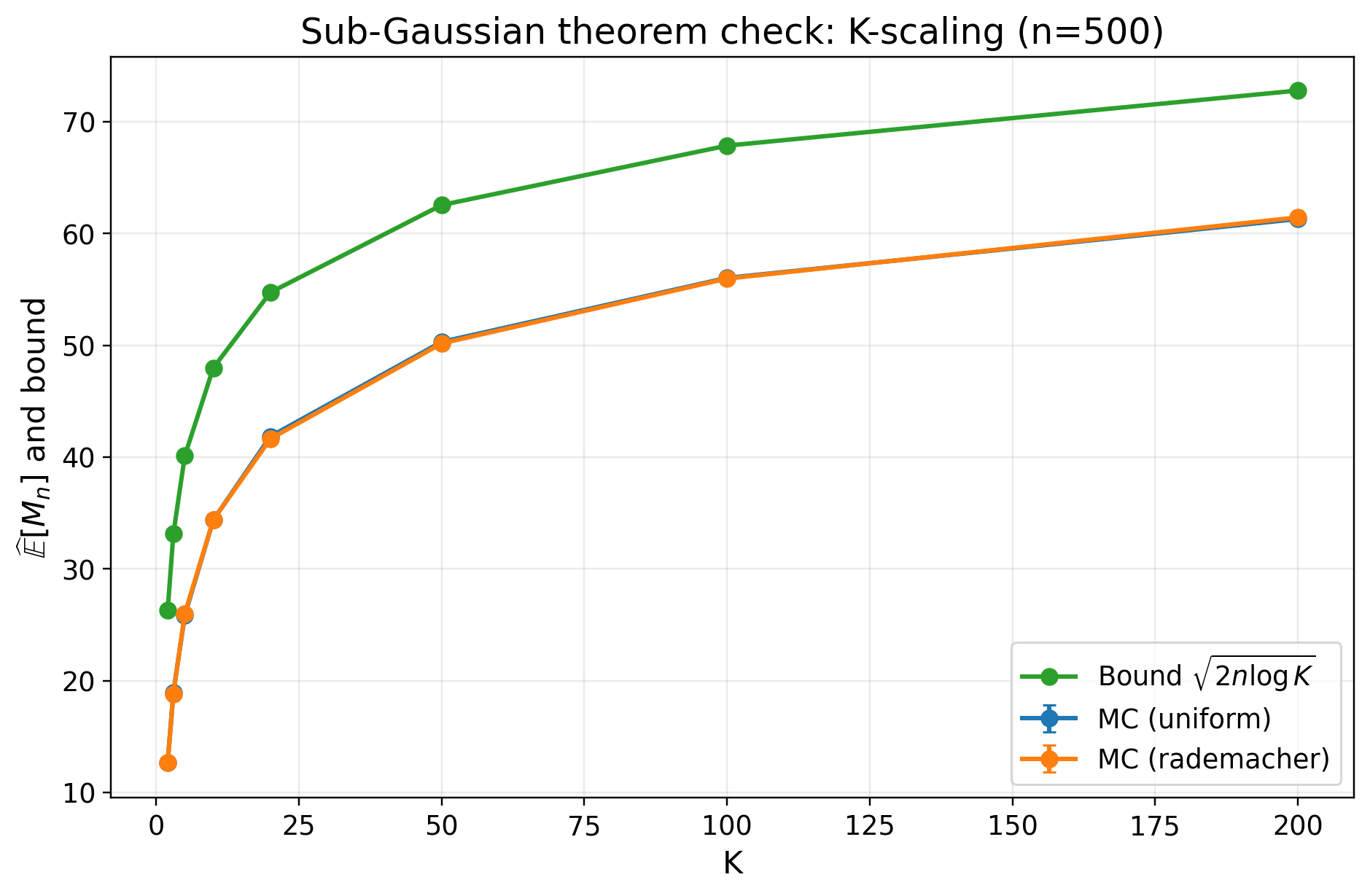}
  \caption{\(K\)-scaling at fixed \(n=500\).}
  \label{fig:subg_K_scaling_n500}
\end{subfigure}

\caption{Scaling in $n$ and $K$ for two uniform and Rademacher increment laws. Panel (a) verifies the $\sqrt{n}$ growth at fixed $K$; panel (b) verifies the $\sqrt{\log K}$ growth at fixed $n$.}

\label{fig:subg_scaling_side_by_side}
\end{figure}

Figure~\ref{fig:subg_scaling_side_by_side} provides an illustration of the $\sqrt{n\log K}$ envelope and its implied $\sqrt{n}$ (fixed $K$) and $\sqrt{\log K}$ (fixed $n$) scalings. We compare Monte Carlo estimates of $\E[M_n]$ for two i.i.d.\ centered, variance-one sub-Gaussian increment families:
(i) \(Y_{i,k}\sim \mathrm{Unif}[-\sqrt{3},\sqrt{3}]\) and (ii) Rademacher \(Y_{i,k}\in\{-1,+1\}\) with equal probability.
The two empirical curves nearly overlap. The green curve is the nonasymptotic bound from Theorem~\ref{thm:subg_max}, $\E[M_n]\le \sqrt{2n\,\lambda_{\max}(\Sigma)\,\log K}$.
In these simulations the coordinates are i.i.d.\ with unit sub-Gaussian proxy, so one may take the proxy covariance
\(\Sigma=I_K\), giving \(\lambda_{\max}(\Sigma)=1\) and the plotted bound \(\sqrt{2n\log K}\).

\section{Discussion}\label{sec:discussion}

The identity $\E[M_n] = \sum_{i=1}^n \E[\varphi_K(S_{i-1})]$
is not a deep result---it follows from conditioning and telescoping, but it identifies $\varphi_K$ as the right object of study.
The properties of $\varphi_K$---non-negativity, translation invariance,
maximum at ties, vanishing under dominance---encode the
competition--dominance structure of selection bias
in a single function. Also the extension to stopping times
(Theorem~\ref{thm:stopping})
is a genuine multi-arm generalization of Wald's equation,
requiring a non-trivial uniform integrability argument.

Several existing approaches address selection bias through different mechanisms.
Nested cross-validation separates selection from evaluation procedurally
but does not quantify the bias magnitude.
Tweedie's formula \citep{efron2011tweedie} considers the setting where a large number of estimates $z_i \sim N(\mu_i, \sigma^2)$ are observed, each with its own unknown mean $\mu_i$, and the largest $z_i$'s tend to overestimate their corresponding $\mu_i$'s. Tweedie's formula provides the correction $\hat\mu_i = z_i + \sigma^2\,(\log f)'(z_i)$, where $f$ is the marginal density. This shrinks individual point estimates toward the population center. Our decomposition addresses a different object: instead of correcting individual estimates, we characterize the aggregate selection bias $\E[\max_k S_k]$---the expected amount by which the winner's cumulative score exceeds its true mean. Tweedie's formula operates on the cross-section (many parallel estimates at a single time), while our decomposition operates along the time axis (a single set of $K$ models evaluated sequentially).

\citet{cawley2010over} showed that the variance of the model selection criterion---not just its bias---can cause overfitting during model selection, leading to optimistic performance estimates. Nested cross-validation addresses this by using an inner loop for model selection and an outer loop for performance evaluation, thereby separating selection from evaluation procedurally. This is an effective practical fix, and it avoids the bias rather than measuring it. $\E[\max_k S_{n,k}]$ is the bias that nested CV aims to eliminate, and the bias concentration law (Theorem~\ref{thm:general_qualitative}(b)) explains why even modest outer-loop sample sizes suffice.

Extensions to dependent increments (mixing conditions)
and correlated arms (shared architecture components)
require new tools, possibly from the decoupling framework
of \citet{de2012decoupling}.

\section{Conclusion}\label{sec:conclusion}

We have introduced the selection premium function $\varphi_K$
and developed it as a tool for the analysis of selection bias
when $K$ models are evaluated on shared data.
The per-step decomposition $\E[\max_k S_{n,k}] = \sum_{i=1}^n \E[\varphi_K(S_{i-1})]$,
while elementary in its derivation, extends to stopping times as a multi-arm Wald identity,
and establishes a universal bias concentration law \ref{sec:decay}.
For practitioners, the decomposition quantifies
how much optimistic bias to expect
and confirms that selection bias is predominantly
a small-sample phenomenon:
it accumulates during the early competition phase
and becomes negligible once a leader has emerged.

\section*{Acknowledgements}

We would like to thank Spencer Frei for helpful discussions. This research was supported by Google DeepMind under project number GT009019.

\bibliographystyle{plainnat}

\bibliography{refs} 


\appendix

Source code and all data files are available upon request on GitHub.
\end{document}